\newtheorem{theorem}{Theorem}[section]
\newtheorem{corollary}{Corollary}[theorem]
\newtheorem{proposition}{Proposition}[section]
\newtheorem{lemma}{Lemma}[section]
\newcommand{\E}[1]{\mathbb{E}\left[ #1 \right] }
\newcommand{\Var}[1]{\mathbb{V}\mathrm{ar}\left( #1 \right) }
\newcommand{\Prob}[1]{\mathbb{P}\left\lbrace #1 \right\rbrace}
\newcommand{\dx}[1]{ d#1 }
\newcommand{\Event}[1]{\left\lbrace #1 \right\rbrace }
\newcommand{\Pmatrix}[2]{\boldsymbol{P}_{#1,#2}}
\newcommand{\Qmatrix}[2]{\boldsymbol{Q}_{#1,#2}}
\newcommand{\Ones}[2]{\underset{(#1 \times #2)}{\boldsymbol{1}}}
\newcommand{\pink}[2]{\boldsymbol{\pi}_{#1}^{(#2)}}
\newcommand{\given}{\,|\,}
\newcommand{\Pk}[1]{P^{\langle #1 \rangle}}
\newcommand{\Indicator}[1]{\llbracket #1 \rrbracket}
\newcommand{\nthCoeff}{[z^n] \,}
\begin{document}  

\title{A class of random recursive tree algorithms with deletion}
\author{Arnold T. Saunders, Jr.}
\address{Department of Statistics, The George Washington University, Washington, DC 20052}
\email{arnold\_saunders@gwu.edu}
\date{\today}

\begin{abstract}
We examine a discrete random recursive tree growth process that, at each time step, either adds or deletes a node from the tree with probability $p$ and $1-p$, respectively. Node addition follows the usual uniform attachment model. For node removal, we identify a class of deletion rules guaranteeing the current tree $T_n$ conditioned on its size is uniformly distributed over its range. By using generating function theory and singularity analysis, we obtain asymptotic estimates for the expectation and variance of the tree size of $T_n$ as well as its expected leaf count and root degree. In all cases, the behavior of such trees falls into three regimes determined by the insertion probability: $p < 1/2$, $p = 1/2$ and $p > 1/2$. Interestingly, the results are independent of the specific class member deletion rule used.

\medskip
\noindent \textbf{Keywords.} Recursive trees, Random deletions, Generating functions, Singularity analysis 

\end{abstract}

\maketitle

\section{Introduction}

Tree evolution algorithms supporting both node insertion and deletion are notoriously hard to analyze. Jonassen and Knuth showed deriving the distribution of a mere three-node random binary search tree after a finite series of repeated insertions and deletions required Bessel functions and solving bivariate integral equations. In their words, ``the analysis ranks among the more difficult of all exact analyses of algorithms...the problem itself is intrinsically difficult \cite{JONA78}.'' Panny later chronicled a near half century of hopeful assumptions and poor intuition about the effect of deletions on binary search tree distribution \cite{PANN10}. In this paper, we study the effect of a class of deletion rules on the evolution of random recursive trees.

Random recursive trees are stochastic growth processes with diverse applications in modeling searching and sorting algorithms, the spread of rumors, Ponzi schemes and manuscript provenance \cite{SMYT95}.  The idea behind the model is straightforward. Starting from a root node labeled 1, we construct a tree one vertex at a time using sequentially labeled nodes. Each newly introduced node is ``randomly'' attached to an existing one in the tree. 

The insertion or attachment rule we use to construct the tree determines the distribution of $T_n$ over its range. For example, consider an insertion rule where each new node is attached to any of the existing ones with equal probability. The resulting trees are known as \textit{uniform recursive trees} or \textit{uniform attachment trees}. In this case, $T_n$ is uniformly distributed over the $n!$ possible recursive trees with $n+1$ nodes. Much research has gone into characterizing the limiting random variables and distributions of functionals on uniform recursive trees such as node degree \cite{DOND82,JANS05}, height \cite{PITT94}, leaf count \cite{NAJO82}, etc.

Motivated by work with random graph models incorporating both insertion and deletion rules (eg, \cite{BENN07,DEON07,GHOS13,JOHA18,ZHAN16}), we examine the less-studied application of such rules to tree evolution models. Specifically, we start with tree $T_0$ containing a single node labeled 1. At each time step $n \geq 1$, we either add an incrementally-labeled node to the tree with probability $p$ or delete an existing node with probability $q=1-p$. After a deletion, we reattach and relabel the remaining nodes so that $T_{n+1}$ is again a recursive tree. There is one exception to the preceding: we do not allow the tree to vanish. So if $T_n$ is the single node tree, it remains unchanged with probability $q$.

We always add nodes using the uniform attachment rule. We will however identify the class of deletion rules guaranteeing $T_n$, when conditioned on tree size, remains uniformly distributed over its range. We then, using singularity analysis of generating functions, provide a means for deriving the exact and asymptotic expressions of common functionals on $T_n$ such as tree size, leaf count and root degree. 

\section{Conditional Equiprobability}

A simplifying property of uniform attachment trees is the equiprobability of the range of $T_n$. Once we introduce deletion, this need not be the case. But if our choice of deletion rule could guarantee---conditioned on tree size---a uniformly distributed $T_n$, its analysis is greatly simplified. To specify the class of such deletion rules, we must first make concrete the notion of insertion and deletion rules.

Define the \textit{size} of a tree to be the number of nodes it possesses. Next define the \textit{stratum number} of a tree to be one less than its size. Let \textit{stratum} $k$ denote the set of all trees sharing the common stratum number $k$. Then stratum $k$ contains $k!$ trees and we can assign each one a unique integer identifier from 1 to $k!$ and arrange them in canonical order. We can now capture all the probabilities of transitioning from one of the $k!$ trees in stratum~$k$ to one of the $(k+1)!$ trees in stratum~$k+1$ (an insertion) in a single $k! \times (k+1)!$ conditional probability matrix $\Pmatrix{k}{k+1}$. Analogously, we can record the probabilities of transitioning from a stratum~$k+1$ tree to a stratum~$k$ tree (a deletion) by $\Qmatrix{k+1}{k}$. Insertion and deletion rules then are simply specifications of the form of matrices $\Pmatrix{k}{k+1}$ and $\Qmatrix{k+1}{k}$ for each $k \geq 0$, which we will call insertion and deletion matrices, respectively.

Since we are using uniform attachment as our insertion rule, each insertion matrix $\Pmatrix{k}{k+1}$ has the form
\begin{equation*}
\Pmatrix{k}{k+1} = \frac{p}{k+1} \, \boldsymbol{D},
\end{equation*} 
where $\boldsymbol{D}$ is a 0-1 matrix with row sums $k+1$ and columns sums 1. The exact placement of the 0s and 1s depends on the tree canonicalization used.

In the next theorem, we identify a necessary and sufficient condition on deletion matrices $\Qmatrix{k+1}{k}$ for conditional equiprobability and then establish the class of growth algorithms with that property.

\begin{theorem}
Conditioned on stratum number, each tree is equiprobable at time $n \geq 1$ if and only if
\begin{equation}
\Ones{1}{(k+1)!} \Qmatrix{k+1}{k} \propto \Ones{1}{k!} \hspace{5mm} (0 \leq k \leq n-1).
\label{eq:eqcolsums}
\end{equation}
Note the above is equivalent to requiring all column sums of $\Qmatrix{k+1}{k}$ to be identical. 

\begin{proof}
$(\Rightarrow)$ Assume that, conditioned on stratum number, each tree within a stratum is equiprobable at time $n \geq 1$. When $k=0$, the assertion is trivially true so let us assume $k \geq 1$. Let $T_n$ denote the recursive tree at time $n$ and $S_n$ its stratum number. Additionally, let $t$ be a stratum $k$ tree. Then by hypothesis, we have
\begin{equation*}
\Prob{T_n = t  \given  S_n=k} = \frac{1}{k!},
\end{equation*}
or equivalently
\begin{equation*}
\Prob{T_n = t} = \frac{1}{k!} \, \Prob{S_n=k}.
\end{equation*}
If we denote the distribution of $T_n$ within stratum $k$ by $\pink{n}{k}$, we can summarize this result succinctly with
\begin{equation}
\pink{n}{k} = \frac{1}{k!} \, \Prob{S_n=k} \Ones{1}{k!}.
\label{eq:eqprophypothesis}
\end{equation}
Consequently, by conditioning on the action (ie, insertion or deletion) at time $n$, we can express the distribution of the $k$th stratum ($1 \leq k \leq n-1$) at time $n+1$ by the following equality
\begin{multline}
\frac{1}{k!} \, \Prob{S_{n+1}=k} \Ones{1}{k!} = \pink{n+1}{k} = \pink{n}{k-1} \Pmatrix{k-1}{k} + \pink{n}{k+1} \Qmatrix{k+1}{k} =\\
\frac{1}{(k-1)!} \, \Prob{S_n=k-1} \Ones{1}{(k-1)!} \Pmatrix{k-1}{k} + \frac{1}{(k+1)!} \, \Prob{S_n=k+1} \Ones{1}{(k+1)!} \Qmatrix{k+1}{k}.
\label{eq:eqprobstratk}
\end{multline}
Next, by observing
\begin{equation*}
\Ones{1}{(k-1)!} \Pmatrix{k-1}{k} = \frac{p}{k} \Ones{1}{k!}, 
\end{equation*}
and noting the inequality
\begin{equation*}
\Prob{S_n=k} \geq \Prob{\text{$n-k$ deletions followed by $k$ insertions}} = q^{n-k}p^k > 0
\end{equation*}
holds whenever $0 \leq k \leq n$, the probabilities $\Prob{S_n=k-1}$ and $\Prob{S_n=k+1}$ in \eqref{eq:eqprobstratk}, subject to the given constraint $1 \leq k \leq n-1$, are positive, we can rearrange the terms on the left and right-hand sides of \eqref{eq:eqprobstratk} to obtain
\begin{equation*}
\Ones{1}{(k+1)!} \Qmatrix{k+1}{k} = \frac{k+1}{\Prob{S_n=k+1}} \left[ \Prob{S_{n+1}=k} - p \, \Prob{S_n=k-1} \right] \Ones{1}{k!}.
\end{equation*}
Finally, when $1 \leq k \leq n-1$, we have
\begin{equation*}
\Prob{S_{n+1}=k} = p \, \Prob{S_n=k-1} + q \, \Prob{S_n=k+1} > p \, \Prob{S_n=k-1}.
\end{equation*}
Thus $\Ones{1}{(k+1)!} \Qmatrix{k+1}{k} \propto \Ones{1}{k!}$ as claimed. \\

$(\Leftarrow)$ Assume $\Ones{1}{(k+1)!} \Qmatrix{k+1}{k} \propto \Ones{1}{k!}$ for arbitrary $0 \leq k \leq n-1$. By using mathematical induction on $n$, we show \eqref{eq:eqprophypothesis} holds. 

Since strata 0 and 1 contain only one tree each, the result is trivially true for $n=1$. Next assume it also holds for some arbitrary $n \geq 1$ and consider the case $n+1$. Since \eqref{eq:eqprophypothesis} always holds for $k =0$ and $k=n$ at time $n$, we can restrict our attention to $1 \leq k \leq n$ at time $n+1$. Now, by conditioning on the action at time $n$, we have
\begin{equation*}
\pink{n+1}{k} = \begin{dcases}
\pink{n}{k-1} \Pmatrix{k-1}{k} + \pink{n}{k+1} \Qmatrix{k+1}{k}, & 1 \leq k \leq n-1 \\
\pink{n}{k-1} \Pmatrix{k-1}{k}, & k=n.
\end{dcases}
\end{equation*}
Thus when $k=n$ we have
\begin{equation*}
\pink{n+1}{k} = \pink{n+1}{n} = \pink{n}{n-1} \Pmatrix{n-1}{n} \propto \Ones{1}{(n-1)!} \Pmatrix{n-1}{n} = \frac{p}{n!} \Ones{1}{n!} \propto \Ones{1}{k!}.
\end{equation*}
On the other hand, when $k < n$, we have
\begin{align*}
\pink{n+1}{k} &= \pink{n}{k-1} \Pmatrix{k-1}{k} + \pink{n}{k+1} \Qmatrix{k+1}{k} \\
& \propto \Ones{1}{(k-1)!} \Pmatrix{k-1}{k} + \Ones{1}{(k+1)!} \Qmatrix{k+1}{k} \propto \frac{p}{k!} \Ones{1}{k!} + \Ones{1}{k!} \propto \Ones{1}{k!}.
\end{align*}
In both cases we have $\pink{n+1}{k} = \beta \Ones{1}{k!}$ for some $\beta > 0$. Recalling
\begin{multline*}
\Prob{S_{n+1}=k} = \\
\sum_{t \in \mathcal{T}_k} \Prob{S_{n+1}=k,T_{n+1}=t} = \sum_{t \in \mathcal{T}_k} \Prob{T_{n+1}=t} = \pink{n+1}{k} \Ones{k!}{1} = \beta \Ones{1}{k!} \Ones{k!}{1} = \beta k!,
\end{multline*}
where $\mathcal{T}_k$ is the set of stratum $k$ trees, we conclude $\beta = \frac{1}{k!}\Prob{S_{n+1}=k}$ as desired.
\end{proof}

\label{thm:equiprob}
\end{theorem}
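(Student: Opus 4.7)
My plan is to prove both implications by exploiting the one-step evolution equation for the conditional distribution $\pink{n}{k}$ within stratum $k$. Because a single time step either inserts into a stratum $k-1$ tree (via $\Pmatrix{k-1}{k}$) or deletes from a stratum $k+1$ tree (via $\Qmatrix{k+1}{k}$), the row vector $\pink{n+1}{k}$ decomposes as a sum of two contributions, one from each action. This recursion, combined with the specific structure of uniform attachment, is the engine of both directions.

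For the forward direction, I would substitute the equiprobability hypothesis $\pink{n}{k} = \frac{1}{k!}\Prob{S_n=k}\Ones{1}{k!}$ into this recursion. The insertion contribution takes care of itself: the uniform attachment structure of $\Pmatrix{k-1}{k}$ gives $\Ones{1}{(k-1)!}\Pmatrix{k-1}{k} = \frac{p}{k}\Ones{1}{k!}$, which is already proportional to $\Ones{1}{k!}$. Solving the recursion for the deletion term then forces $\Ones{1}{(k+1)!}\Qmatrix{k+1}{k}$ to be a scalar multiple of $\Ones{1}{k!}$, which is the desired column-sum condition. The one nontrivial point is dividing by $\Prob{S_n=k+1}$; its positivity can be guaranteed by exhibiting an explicit action sequence (e.g., enough deletions at the root followed by enough insertions) that realizes stratum $k+1$ at time $n$ with positive probability.

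The reverse direction I would prove by induction on $n$, with the trivial base $n=1$ since strata $0$ and $1$ each contain a single tree. Given the induction hypothesis at time $n$, the analysis at time $n+1$ splits naturally according to which of the two contributions is present. In the interior case, both are nonzero: the assumed column-sum condition makes the deletion contribution proportional to $\Ones{1}{k!}$, the uniform attachment identity handles the insertion contribution, and their sum is again proportional to $\Ones{1}{k!}$. The boundary cases (where only one action can populate the stratum) collapse to a single term that is already of the right form. In all cases, summing the entries of $\pink{n+1}{k}$ identifies the proportionality constant as $\frac{1}{k!}\Prob{S_{n+1}=k}$, closing the induction.

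The main obstacle I anticipate is not conceptual but bookkeeping: carefully managing the boundary strata at each time step so that the degenerate form of the recursion is applied correctly, and threading the scalar proportionality constants through the algebra so that the final normalization matches $\Prob{S_{n+1}=k}$. A secondary subtlety is making the positivity argument for $\Prob{S_n=k+1}$ in the forward direction sufficiently explicit to justify the division, since without it the deduced proportionality could be vacuous.
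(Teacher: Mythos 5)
Your proposal follows essentially the same route as the paper's proof: the forward direction substitutes the equiprobability hypothesis into the one-step stratum recursion, uses $\Ones{1}{(k-1)!}\Pmatrix{k-1}{k}=\frac{p}{k}\Ones{1}{k!}$, and solves for the deletion term after justifying division by $\Prob{S_n=k+1}>0$ via an explicit action sequence; the reverse direction is the same induction on $n$ with the boundary stratum handled separately and the constant identified by summing entries. No substantive differences to report.
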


A consequence of Theorem~\ref{thm:equiprob} is that if our choice of deletion algorithm obeys \eqref{eq:eqcolsums} for arbitrary $n \geq 1$, then for all $n \geq 1$, all trees in the same stratum are equiprobable. We summarize this result with the following corollary.

\begin{corollary}[The Class of Conditional Equiprobable Growth Algorithms]
If the deletion matrix $\Qmatrix{k+1}{k}$ for a given growth algorithm satisfies \eqref{eq:eqcolsums} for arbitrary $k \geq 0$, then it supports conditional equiprobability. Moreover since any conditional equiprobable algorithm possesses this property, this criterion describes the class of such algorithms. Finally, this class is nonempty.

\begin{proof}
To show the class is not empty consider the ``last in, first out'' (\textit{LIFO}) deletion rule. When invoked, we delete the last node inserted into the tree. Then each row in an arbitrary deletion matrix $\Qmatrix{k+1}{k}$ contains exactly one nonzero entry, $q$. Each column of this matrix represents a stratum $k$ tree. By adding a node to this tree, we obtain $k+1$ trees in strata $k+1$. Hence each of the column sums is $(k+1)q$, satisfying condition \eqref{eq:eqcolsums}.
\end{proof}
\label{cor:eqprobclass}
\end{corollary}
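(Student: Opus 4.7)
The plan is to observe that the first two assertions of the corollary are immediate from Theorem~\ref{thm:equiprob}, so the only substantive work is demonstrating non-emptiness. Theorem~\ref{thm:equiprob} is a biconditional: its $(\Leftarrow)$ direction, applied simultaneously at every $n \geq 1$, yields that whenever \eqref{eq:eqcolsums} holds for all $k \geq 0$, conditional equiprobability holds at every $n$; its $(\Rightarrow)$ direction, applied at any single $n$, rules out deletion rules outside this class. Together these give the first two claims with no further argument.

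For non-emptiness, I would exhibit the \emph{last-in, first-out} (\textit{LIFO}) rule and verify \eqref{eq:eqcolsums} directly. Under LIFO, a deletion step removes the most recently inserted node, which carries the highest label and is necessarily a leaf; consequently neither reattachment nor relabeling is required, and the effect on the underlying tree is transparent. Because LIFO is deterministic (given that a deletion occurs), each row of $\Qmatrix{k+1}{k}$ has exactly one nonzero entry, equal to $q$. To verify equal column sums, fix a stratum $k$ tree $t$ and enumerate its LIFO-preimages in stratum $k+1$: these are precisely the trees obtained by adjoining a new (highest-labeled) vertex to one of the $k+1$ existing vertices of $t$, giving $k+1$ distinct trees. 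Hence the column sum corresponding to $t$ equals $(k+1)q$, a value independent of the column, so \eqref{eq:eqcolsums} is satisfied.

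The only real obstacle is the combinatorial bookkeeping for LIFO: one must confirm that (i) the $k+1$ augmentations of a fixed $t$ yield genuinely distinct stratum $k+1$ trees under the chosen canonicalization, and (ii) no other stratum $k+1$ tree LIFO-maps to $t$. Both are straightforward because the highest-label vertex of any stratum $k+1$ tree is unique and, once removed, its former attachment point in $t$ determines the preimage completely. With this sanity check in hand, no further difficulty is expected.
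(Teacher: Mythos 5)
Your proposal is correct and follows essentially the same route as the paper: the first two assertions are read off directly from Theorem~\ref{thm:equiprob}, and non-emptiness is established by the \textit{LIFO} rule, with each row of $\Qmatrix{k+1}{k}$ containing a single entry $q$ and each stratum $k$ tree having exactly $k+1$ stratum $k+1$ preimages, giving constant column sums $(k+1)q$. Your extra check that the $k+1$ augmentations are distinct and exhaust the preimages is just a more explicit version of the paper's counting step.
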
 

\section{Tree Size Generating Functions and Asymptotics}

Having established the class of deletion rules ensuring $T_n$ given $\Event{S_n=k}$ is equally likely to be any one of the $k!$ trees in stratum $k$, we next explore the distribution and moments of $S_n$, as well as those of several functions of $S_n$. 

\begin{proposition}
Let $P_{n,0}$ denote the probability the $n$th iteration of the algorithm generates the root tree. The ordinary generating function for the sequence $\{P_{n,0}; n\geq 0\}$ is
\begin{equation}
\Pk{0}(z) = \frac{2}{1-2qz+\sqrt{1-4pqz^2}}.
\label{eq:Pn0_GF}
\end{equation}

The asymptotic estimate for $P_{n,0}$ as a function of $p$ is
\begin{equation}
P_{n,0} \approx
\begin{dcases}
\frac{q-p}{q}, & p < \frac{1}{2} \\
\sqrt{\frac{2}{\pi n}}, & p = \frac{1}{2} \\
O\left( (2\sqrt{pq})^n n^{-3/2}\right) , & p > \frac{1}{2}.
\end{dcases}
\label{eq:Pno_asymp}
\end{equation}
Observe as $p$ approaches $1/2$ from the right, the quantity $2\sqrt{pq}$ goes to $1$, showing $P_{n,0}$ vanishes more slowly for probabilities $p$ close to $1/2$.

\begin{proof}

Noting $\Pk{0}(z)$ is the generating function of a biased excursion, that is, a biased random walk on the nonnegative integers starting and ending at zero, a simple modification of the generating function $M(z) = 2/(1-2z+\sqrt{1-4z^2})$ (see \textit{OEIS} \textbf{A001405}) for unbiased excursions gives us \eqref{eq:Pn0_GF}.

For an asymptotic estimate of $P_{n,0}$, we begin by noting the singularities of $\Pk{0}(z)$ occur at branch points $z = \pm 1/(2\sqrt{pq})$ and, if $p < 1/2$, also at a simple pole $z=1$. The branch points are on the unit circle when $p=1/2$. Otherwise, by a simple calculus argument, they are outside of it.

Consider the case $p < 1/2$. Since the branch points fall outside the unit circle, $\Pk{0}(z)$ is meromorphic within a disk of radius $R$, where $1 < R <  1/(2\sqrt{pq})$. Hence we can expand $\Pk{0}(z)$ about the simple pole $z=1$ to obtain the Laurent series representation
\begin{equation*}
\Pk{0}(z) = \frac{q-p}{q} \left( \frac{1}{1-z} \right)  + g(z),
\end{equation*}  
where $g$ is some function analytic at $z=1$ and therefore has radius of convergence $1/(2\sqrt{pq})$. Thus
\begin{equation*}
P_{n,0} = \frac{q-p}{q} + O\left((2\sqrt{pq} + \varepsilon)^n\right),
\end{equation*} 
where $2\sqrt{pq} < 1$ and $\varepsilon > 0$ is an arbitrarily small positive number \cite[Theorem IV.10, p 258]{FLAJ09}.

Next consider the case $p > 1/2$. Here the radius of convergence is determined by branch points on opposite sides of the imaginary axis. The function $\Pk{0}(z)$ is \textit{star-continuable} \cite[Theorem VI.5, p 398]{FLAJ09} and, in the vicinity of its singularities, we have
\begin{equation*}
\Pk{0}(z) =
\begin{dcases}
O\left( \sqrt{1-2\sqrt{pq}z} \right) , & z \rightarrow \frac{1}{2\sqrt{pq}} \\
O\left( \sqrt{1+2\sqrt{pq}z} \right), & z \rightarrow -\frac{1}{2\sqrt{pq}},
\end{dcases}
\end{equation*}
from which we obtain, by Big-Oh transfer  \cite[Theorem VI.3, p 390]{FLAJ09}, the asymptotic bound $P_{n,0} = O \left( (2\sqrt{pq})^n n^{-3/2} \right)$.

Finally for the case $p = 1/2$, if $M(z)$ is the generating function for the number of excursions of length $n$, then we have $\Pk{0}(z) = M(z/2)$ and therefore by Stirling's approximation
\begin{equation*}
P_{n,0} = \frac{1}{2^n} \binom{n}{\lfloor \frac{n}{2} \rfloor} \sim \sqrt{\frac{2}{\pi n}}.
\end{equation*}

\end{proof}

\label{prop:Pn0_GF}
\end{proposition}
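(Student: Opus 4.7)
The plan is to derive the generating function from first principles via a random-walk decomposition, then analyze asymptotics separately in each regime using standard singularity analysis.

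For the generating function, the stratum number $S_n$ is a biased walk on $\mathbb{Z}_{\geq 0}$ that takes a $+1$ step with probability $p$ and either a $-1$ step (from a positive state) or a self-loop (from $0$) with probability $q$. I would decompose any trajectory returning to $0$ at time $n$ as an alternating sequence of idle steps at the origin (each weighted $qz$) and excursions above the origin. Writing $D(z)$ for the first-passage generating function from height $1$ down to $0$, the usual one-step recurrence gives the quadratic $D(z) = qz + pz\, D(z)^2$, whose analytic-at-zero root is $D(z) = (1 - \sqrt{1-4pqz^2})/(2pz)$. An excursion then contributes $E(z) = pz\, D(z) = (1 - \sqrt{1-4pqz^2})/2$, and summing the geometric series over concatenations of idles and excursions yields
\[
\Pk{0}(z) = \frac{1}{1 - qz - E(z)} = \frac{2}{1 - 2qz + \sqrt{1-4pqz^2}}.
\]

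For the asymptotics, I would first catalogue the singularities. The radicand produces branch points at $z = \pm 1/(2\sqrt{pq})$, which sit strictly outside the unit disk whenever $p \neq 1/2$ (since $4pq < 1$) and touch $\pm 1$ when $p = 1/2$. Rationalizing the denominator shows it vanishes precisely at the roots of $4qz(1-z)$, and a sign check shows $z=1$ is an actual pole of $\Pk{0}$ iff $p < 1/2$. When $p < 1/2$, $z = 1$ is the unique dominant singularity and is a simple pole; a routine residue computation (L'H\^opital on the denominator) gives the leading term $(q-p)/q$ with an exponentially small correction coming from the farther-out branch points. When $p = 1/2$ I would use the identity $\Pk{0}(z) = M(z/2)$ with $M(z) = 2/(1-2z+\sqrt{1-4z^2})$ the known central-binomial generating function, giving $P_{n,0} = 2^{-n}\binom{n}{\lfloor n/2\rfloor}$ and hence $\sqrt{2/(\pi n)}$ via Stirling. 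When $p > 1/2$ the two branch points become the dominant singularities; the denominator evaluates to the nonzero values $1 \mp \sqrt{q/p}$ at $z = \pm 1/(2\sqrt{pq})$, so $\Pk{0}$ exhibits a local $\sqrt{1 \mp 2\sqrt{pq}\,z}$-type singular behavior there, and the Flajolet--Sedgewick transfer theorem for algebraic singularities yields $P_{n,0} = O((2\sqrt{pq})^n n^{-3/2})$.

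The main technical obstacle will be the $p > 1/2$ case: two conjugate real singularities sit symmetrically on the circle of convergence, so one must verify $\Pk{0}$ is star-continuable in a domain covering both points and aggregate the two singular contributions via the transfer theorem while confirming that no cancellations weaken the claimed bound. The other regimes reduce to routine techniques---simple-pole residue extraction for $p < 1/2$ and Stirling applied to an explicit closed form for $p = 1/2$---so the $p > 1/2$ analysis is where I would expect to spend most of the care.
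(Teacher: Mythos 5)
Your proposal is correct and follows essentially the same route as the paper: the same excursion viewpoint for the generating function, and the same three-regime singularity analysis (simple-pole residue at $z=1$ for $p<1/2$, the identity $\Pk{0}(z)=M(z/2)$ plus Stirling for $p=1/2$, and square-root singular behavior at $z=\pm 1/(2\sqrt{pq})$ with Big-Oh transfer for $p>1/2$). The only difference is that you derive the generating function from first principles via the first-passage quadratic $D(z)=qz+pz\,D(z)^2$ and the idle/excursion sequence decomposition, whereas the paper simply cites it as a modification of the unbiased excursion generating function; your version supplies the detail the paper leaves implicit.
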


We apply the same methodology to the remaining generating functions in this section. Unless the determination of the asymptotic estimates introduces something new, we will state the results without proof.

\begin{lemma}
Let $P_{n,1}$ denote the probability the $n$th iteration of the algorithm generates a stratum 1 tree. The ordinary generating function for the sequence $\{P_{n,1}; n\geq 0\}$ is
\begin{equation}
\Pk{1}(z) = \left( \frac{1}{qz} - 1 \right) \Pk{0}(z) - \frac{1}{qz}.
\label{eq:Pn1_GF}
\end{equation}

\begin{proof}
Let us condition on the last iteration to determine the probability of obtaining the root tree at iteration $n+1$. Doing so yields the recurrence relation
\begin{equation*}
P_{n+1,0} = q P_{n,0} + q P_{n,1}.
\end{equation*}
Rearranging terms so that $P_{n,1}$ is expressed in terms of $P_{n,0}$ and  $P_{n+1,0}$, then multiplying both size by $z^n$ and summing over $n \geq 0$ gives us
\begin{equation*}
\Pk{1}(z) = \frac{1}{q} \sum_{n \geq 0} P_{n+1,0} \, z^n - \Pk{0}(z) = \frac{1}{qz} \left[ \Pk{0}(z) - 1 \right]  - \Pk{0}(z) = \left( \frac{1}{qz} - 1 \right) \Pk{0}(z) - \frac{1}{qz}.
\end{equation*}

\end{proof}

\end{lemma}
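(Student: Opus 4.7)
The plan is to derive a simple forward recurrence connecting $P_{n+1,0}$ to $P_{n,0}$ and $P_{n,1}$, then solve for $P_{n,1}$ in terms of $P_{n,0}$ and $P_{n+1,0}$, and finally promote this relation to a relation between ordinary generating functions.

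First I would condition on the state of the tree at time $n$ and the action (insertion or deletion) at step $n+1$ to reach stratum 0 at time $n+1$. Since insertions strictly increase stratum and the only strata from which a single deletion event can land in stratum 0 are stratum 0 itself (via the ``blocked'' deletion where the root tree is preserved) and stratum 1 (via a genuine node deletion), this gives the two-term recurrence
\begin{equation*}
P_{n+1,0} = q P_{n,0} + q P_{n,1}.
\end{equation*}
This is the key structural observation and handles both the protective rule that prevents the tree from vanishing and the fact that any stratum 1 tree has exactly one deletable nonroot node, so the transition probability to stratum 0 is simply $q$.

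Next I would algebraically invert this to express $P_{n,1} = \frac{1}{q} P_{n+1,0} - P_{n,0}$, multiply through by $z^n$, and sum over $n \geq 0$. The sum $\sum_{n\geq 0} P_{n+1,0}\, z^n$ is the standard shift $\frac{1}{z}(\Pk{0}(z) - P_{0,0}) = \frac{1}{z}(\Pk{0}(z) - 1)$, using the initial condition $P_{0,0} = 1$ (the algorithm starts at the root tree). Collecting terms yields the claimed closed form.

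No analytic obstacle arises here --- the work is entirely combinatorial bookkeeping of the one-step transitions, and the only subtlety worth emphasizing in the write-up is the self-loop contribution $qP_{n,0}$ coming from the non-vanishing rule, since without it the recurrence would incorrectly omit the dominant term when $p < 1/2$. Once the recurrence is written down, the generating-function manipulation is routine and $\Pk{1}(z)$ falls out immediately in terms of $\Pk{0}(z)$ already computed in Proposition~\ref{prop:Pn0_GF}.
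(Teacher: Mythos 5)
Your proposal is correct and follows essentially the same route as the paper: the recurrence $P_{n+1,0} = qP_{n,0} + qP_{n,1}$ obtained by conditioning on the last step (including the self-loop from the non-vanishing rule), followed by the standard shift $\sum_{n\geq 0}P_{n+1,0}z^n = \frac{1}{z}\left(\Pk{0}(z)-1\right)$ using $P_{0,0}=1$. Your extra remarks justifying the two-term recurrence are a welcome amplification of a step the paper leaves implicit, but the argument is the same.
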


\begin{proposition}
Let $P_{n,k}$ denote the probability of obtaining a stratum $k$ tree on the $n$th iteration. If we mark the stratum number with $u$, then the bivariate generating function $P(z,u)$ for the double sequence $\{ P_{n,k}; n \geq 0, k \geq 0 \}$ is
\begin{equation}
P(z,u) = \frac{q(1-u)z\Pk{0}(z)-u}{qz-u(1-puz)}.
\label{eq:Pnk_GF}
\end{equation}

\begin{proof}
For fixed $k$, $k \geq 0$, let us denote the generating function of the sequence $\{ P_{n,k}; n \geq 0  \}$ by $\Pk{k}(z)$ so that $P(z,u) = \sum_k \Pk{k}(z) \, u^k$. Then for $k \geq 1$, if we condition on the last iteration, we have the recurrence relation
\begin{equation*}
P_{n+1,k} = p \, P_{n,k-1} + q \, P_{n,k+1}. 
\end{equation*}
Multiplying both sides by $z^n$ and summing over $n \geq 0$ give us
\begin{align*}
\sum_{n \geq 0} P_{n+1,k} \, z^n &= p \Pk{k-1}(z) + q \Pk{k+1}(z) \\
\frac{1}{z} \left[ \Pk{k}(z) - P_{0,k} \right] &= p \Pk{k-1}(z) + q \Pk{k+1}(z) \\
\frac{1}{z}\Pk{k}(z) &= p \Pk{k-1}(z) + q \Pk{k+1}(z) & \text{since\ } P_{0,k}=0 \text{\ for all\ } k \geq 1.
\end{align*}
Now multiplying both sides by $u^k$ and summing over $k \geq 1$ yields
\begin{align*}
\frac{1}{z} \sum_{k \geq 1} \Pk{k}(z) u^k &= p \sum_{k \geq 1} \Pk{k-1}(z) u^k + q \sum_{k \geq 1} \Pk{k+1}(z) u^k \\
\frac{1}{z} \left[ P(z,u) - \Pk{0}(z) \right] &= pu P(z,u) + \frac{q}{u} \left[ P(z,u) - \Pk{0}(z) - u\Pk{1}(z) \right] \\
P(z,u) &= \frac{\left[ \frac{1}{z}-\frac{q}{u} \right] \Pk{0}(z) - q \Pk{1}(z)}{\frac{1}{z}-pu-\frac{q}{u}}.
\end{align*}
Finally, substituting the right side of \eqref{eq:Pn1_GF} for $\Pk{1}(z)$ and some simplification gives us \eqref{eq:Pnk_GF}.
\end{proof}

\end{proposition}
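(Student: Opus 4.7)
The plan is to exploit the Markov structure of the process by conditioning on the last action at step $n$. Since an insertion carries the stratum from $k-1$ to $k$ with probability $p$ and a deletion carries $k+1$ to $k$ with probability $q$, for every $k\geq 1$ the probabilities should satisfy the linear recurrence
\begin{equation*}
P_{n+1,k} = p\,P_{n,k-1} + q\,P_{n,k+1}.
\end{equation*}
(The boundary case $k=0$ is different because the algorithm forbids deletion of the root; but that case is precisely what Lemma~3.1 handles, so I will be able to outsource it.) This same recurrence drove \eqref{eq:Pn0_GF} and \eqref{eq:Pn1_GF}; the new ingredient is that it is uniform in $k$, which is exactly what makes a second marking variable $u$ for the stratum number the natural move.

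Next I would translate the recurrence into the $z$-domain. Multiplying by $z^n$, summing over $n\geq 0$, and using the initial condition $P_{0,k}=0$ for $k\geq 1$ (since $T_0$ is the root tree) gives
\begin{equation*}
\frac{1}{z}\Pk{k}(z) = p\,\Pk{k-1}(z) + q\,\Pk{k+1}(z) \qquad (k\geq 1).
\end{equation*}
I would then multiply this identity by $u^k$ and sum over $k\geq 1$ to collapse the family into a single equation for $P(z,u)$. The three sums $\sum_{k\geq 1}\Pk{k}(z)u^k$, $\sum_{k\geq 1}\Pk{k-1}(z)u^k$, and $\sum_{k\geq 1}\Pk{k+1}(z)u^k$ each differ from $P(z,u)$ by an explicit number of boundary terms involving only $\Pk{0}(z)$ and $\Pk{1}(z)$ (shifted by factors of $u$ or $u^{-1}$); after that bookkeeping, the equation is linear in $P(z,u)$ and can be solved algebraically, yielding a formula in terms of $\Pk{0}(z)$ and $\Pk{1}(z)$.

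The last step is to eliminate $\Pk{1}(z)$ by substituting \eqref{eq:Pn1_GF} from Lemma~3.1 and simplifying. I expect the main obstacle to be purely algebraic rather than conceptual: tracking the boundary shifts correctly when rewriting $\sum_{k\geq 1}\Pk{k+1}(z)u^k = u^{-1}\bigl(P(z,u)-\Pk{0}(z)-u\Pk{1}(z)\bigr)$, clearing the $1/u$ factor, and then collapsing the resulting rational expression into the compact form \eqref{eq:Pnk_GF} with denominator $qz-u(1-puz)$. As a sanity check on the final answer I would set $u=1$ and verify that $P(z,1)$ collapses to $1/(1-z)$, consistent with the requirement that total probability over all strata at each time step equals~$1$.
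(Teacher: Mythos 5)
Your proposal follows the paper's proof exactly: the same recurrence $P_{n+1,k}=pP_{n,k-1}+qP_{n,k+1}$ for $k\geq 1$, the same translation to $\Pk{k}(z)$ using $P_{0,k}=0$, the same summation over $k\geq 1$ with boundary terms in $\Pk{0}(z)$ and $\Pk{1}(z)$, and the same elimination of $\Pk{1}(z)$ via \eqref{eq:Pn1_GF}. The argument is correct, and the $u=1$ sanity check is a nice addition beyond what the paper records.
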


If we let $S_n$ denote the stratum number of a tree at time $n$, then $P_n(u) \equiv \nthCoeff P(z,u)$ is the probability generating function of $S_n$. Thus we immediately have $\E{S_n} = P_n'(1)$ and $\Var{S_n} = P_n''(1)+P_n'(1)-[P_n'(1)]^2$. This idea leads to the following generating functions for the first and second factorial moments of $S_n$ and asymptotic estimates for $\E{S_n}$ and $\Var{S_n}$.

\begin{proposition}
Let $S_n$ denote the stratum number of the tree generated by the $n$th iteration of the algorithm. The generating function $\mu(z)$ for the sequence $\{ \E{S_n}; n \geq 0 \}$ is
\begin{equation}
\mu(z) = \frac{qz\Pk{0}(z)}{1-z} + \frac{(p-q)z}{(1-z)^2}.
\label{eq:mu_n_GF}
\end{equation}
The asymptotic form of $\mu_n \equiv \nthCoeff \mu(z) = \E{S_n}$ is given by
\begin{equation*}
\mu_n \approx \begin{dcases}
\frac{p}{q-p}, & p < \frac{1}{2} \\
\sqrt{\frac{2n}{\pi}} - \frac{1}{2}, & p = \frac{1}{2} \\
(p-q)n + \frac{q}{p-q}, & p > \frac{1}{2}.
\end{dcases}
\end{equation*}

The error bound for the first and third cases is $O((2\sqrt{pq}+\varepsilon)^n)$. When $p = 1/2$, the error bound is $O(n^{-1/2})$.

\begin{proof}
Since $\mu(z) \equiv \left. \partial_u P(z,u) \right|_{u=1}$, the expression \eqref{eq:mu_n_GF} can be obtained from \eqref{eq:Pnk_GF} in a straightforward manner. 

For the asymptotic analysis, we cover only the case $p = 1/2$ since the result will be used again later. Here, the function $\mu(z)$ simplifies to
\begin{equation*}
\mu(z) = \frac{1}{2} \left[  \sqrt{\frac{1+z}{(1-z)^3}} - \frac{1}{1-z} \right],
\end{equation*}
implying
\begin{equation}
\mu_n =  \frac{1}{2} \nthCoeff \left\lbrace \frac{(1+z)^{1/2}}{(1-z)^{3/2}} \right\rbrace - \frac{1}{2}.
\label{eq:mu_n_phalf_1}
\end{equation}
In the neighborhood of the singularities of $\mu(z)$, we have
\begin{equation*}
\frac{(1+z)^{1/2}}{(1-z)^{3/2}} =
\begin{dcases}
\frac{\sqrt{2}}{(1-z)^{3/2}} + O\left( (1-z)^{-1/2} \right), & z \rightarrow 1 \\
 O\left( \sqrt{1+z} \right), & z \rightarrow -1,
\end{dcases}
\end{equation*}
and therefore by Big-Oh transfer
\begin{equation}
\nthCoeff \left\lbrace \frac{(1+z)^{1/2}}{(1-z)^{3/2}} \right\rbrace = 2\sqrt{\frac{2n}{\pi}} + O\left( n^{-1/2} \right).
\label{eq:bigOhtransm1} 
\end{equation}
Substituting this result into \eqref{eq:mu_n_phalf_1} gives us
\begin{equation*}
\mu_n = \sqrt{\frac{2n}{\pi}} - \frac{1}{2} + O\left( n^{-1/2} \right).
\end{equation*}
\end{proof}

\label{prop:Sn_1facmoment} 
\end{proposition}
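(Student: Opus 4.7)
The plan is to derive $\mu(z)$ by partial differentiation of the bivariate generating function $P(z,u)$ from the previous proposition, then apply singularity analysis separately in each of the three regimes determined by $p$.

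Since $P_n(u) = \nthCoeff P(z,u)$ is the probability generating function of $S_n$, we have $\E{S_n} = P_n'(1)$, and consequently $\mu(z) = \left.\partial_u P(z,u)\right|_{u=1}$. Writing $P(z,u) = N(u)/D(u)$ with $N(u) = q(1-u)z\Pk{0}(z) - u$ and $D(u) = qz - u(1-puz)$, the quotient rule at $u=1$ (using $N(1) = -1$ and $D(1) = z - 1$) yields \eqref{eq:mu_n_GF} after elementary simplification and the identity $2p - 1 = p - q$.

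For the asymptotics I would split $\mu(z) = A(z) + B(z)$ with $A(z) = qz\Pk{0}(z)/(1-z)$ and $B(z) = (p-q)z/(1-z)^2$, and study each term in the three regimes. When $p < 1/2$, the local decomposition $\Pk{0}(z) = (q-p)/[q(1-z)] + g(z)$ established in Proposition~\ref{prop:Pn0_GF} makes $A(z)$ contribute a $(q-p)z/(1-z)^2$ piece that \emph{exactly cancels} $B(z)$, reducing $\mu(z)$ to $qzg(z)/(1-z)$. This expression has only a simple pole at $z=1$ and otherwise inherits the branch points of $\Pk{0}$ outside the unit disk, so $\mu_n$ tends to the residue $qg(1) = p/(q-p)$, which I would obtain by carrying the local expansion of $\Pk{0}$ at $z=1$ one order further; the meromorphic transfer theorem then supplies the $O((2\sqrt{pq}+\varepsilon)^n)$ error. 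When $p > 1/2$, we have $\sqrt{1-4pq} = p-q > 0$, so $\Pk{0}$ is analytic at $z=1$ with $\Pk{0}(1) = 1/(p-q)$. Then $A(z)$ acquires a simple pole of residue $q/(p-q)$ while $B(z)$ contributes the linear $(p-q)n$ term plus its own constant; the branch-point analysis from Proposition~\ref{prop:Pn0_GF} again yields the exponentially small remainder. For $p = 1/2$ I would follow the author's route: the algebraic simplification reduces $\mu(z)$ to $\frac{1}{2}[(1+z)^{1/2}/(1-z)^{3/2} - 1/(1-z)]$, and Big-Oh transfer at $z = 1$, together with the benign behavior at $z = -1$, delivers $\sqrt{2n/\pi} - 1/2 + O(n^{-1/2})$.

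The main obstacle is the delicate cancellation in the subcritical case $p < 1/2$: at first glance each of $A$ and $B$ has a double pole at $z = 1$, which would predict linear growth in $n$, whereas the true answer is a bounded limit. One must verify that the coefficients of $(1-z)^{-2}$ cancel exactly---they do, because the residue $(q-p)/q$ of $\Pk{0}$ precisely matches $-(p-q)/q$---and then extract the surviving constant by pushing the Laurent expansion of $\Pk{0}$ about $z=1$ to the next order. Everything else reduces to straightforward application of the standard transfer theorems of Flajolet--Sedgewick already invoked in Proposition~\ref{prop:Pn0_GF}.
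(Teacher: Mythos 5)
Your proposal is correct and follows essentially the same route as the paper: obtain $\mu(z)=\left.\partial_u P(z,u)\right|_{u=1}$ from \eqref{eq:Pnk_GF} and then apply singularity analysis with the Flajolet--Sedgewick transfer theorems, exactly as in the paper's treatment of the $p=1/2$ case. The only difference is that you also write out the $p<1/2$ and $p>1/2$ regimes, which the paper leaves implicit; your key observation there is sound --- the $(1-z)^{-2}$ parts of $qz\Pk{0}(z)/(1-z)$ and $(p-q)z/(1-z)^2$ cancel when $p<1/2$ since the residue of $\Pk{0}$ at $z=1$ is $(q-p)/q$, and the surviving constant $q\,g(1)=p/(q-p)$ and the $p>1/2$ values $\Pk{0}(1)=1/(p-q)$, residue $q/(p-q)$, all check out.
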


\begin{proposition}
Let $S_n$ denote the stratum number of the tree generated by the $n$th iteration of the algorithm. The generating function $\mu^{(2)}(z)$ for the second factorial moment of $S_n$, namely $\E{S_n(S_n-1)}$, is
\begin{equation}
\mu^{(2)}(z) = \frac{2q(2pz-1)z\Pk{0}(z)}{(1-z)^2} + \frac{2(4p-3)pz^2}{(1-z)^3} + \frac{2qz}{(1-z)^3}.
\label{eq:mu2_n_GF}
\end{equation}
The asymptotic form of $\mu^{(2)}_n \equiv \nthCoeff \mu^{(2)}(z) = \E{S_n(S_n-1)}$ is given by

\begin{equation*}
\mu^{(2)}_n \approx \begin{dcases}
2 \left( \frac{p}{q-p} \right)^2, & p < \frac{1}{2} \\
n + 1 - 2\sqrt{\frac{2n}{\pi}}, & p = \frac{1}{2} \\
(p-q)^2n^2 - (4p^2-3)n + \frac{2q(1-3p)}{(p-q)^2}, & p > \frac{1}{2}.
\end{dcases}
\end{equation*}

The error bound for the first and third cases is $O((2\sqrt{pq}+\varepsilon)^n$ and $O\left( n^{-1/2} \right)$ when $p=1/2$.

\label{prop:Sn_2facmoment}
\end{proposition}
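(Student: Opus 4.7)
The plan is to compute $\mu^{(2)}(z) = \partial_u^2 P(z,u)\big|_{u=1}$, following the pattern of Proposition~\ref{prop:Sn_1facmoment}. With $P(z,u) = N(z,u)/D(z,u)$ where $N = q(1-u)z\Pk{0}(z)-u$ and $D = qz-u(1-puz)$, at $u=1$ we have $N=-1$, $D=-(1-z)$, $N_u = -qz\Pk{0}(z)-1$, $D_u = 2pz-1$, $N_{uu}\equiv 0$, and $D_{uu} = 2pz$. Two applications of the quotient rule, evaluation at $u=1$, and simplification via $p+q=1$ then yield \eqref{eq:mu2_n_GF}.

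\textbf{Asymptotics, case $p = 1/2$.} Here I would use the closed form $\Pk{0}(z) = (\sqrt{1+z}-\sqrt{1-z})/(z\sqrt{1-z})$, after which a short cancellation collapses \eqref{eq:mu2_n_GF} to
\begin{equation*}
\mu^{(2)}(z) = \frac{1}{(1-z)^2} - \frac{(1+z)^{1/2}}{(1-z)^{3/2}}.
\end{equation*}
The first summand contributes $n+1$ exactly, and the second is precisely the quantity whose asymptotic was established in \eqref{eq:bigOhtransm1}. Subtracting delivers $n + 1 - 2\sqrt{2n/\pi} + O(n^{-1/2})$, as claimed.

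\textbf{Asymptotics, cases $p \neq 1/2$.} For both subcases, $\mu^{(2)}(z)$ is analytic on the closed unit disk except at $z=1$ and at the branch points $z = \pm 1/(2\sqrt{pq})$ inherited from $\Pk{0}$. When $p<1/2$ the branch points fall outside the unit disk and, by the same Laurent-plus-transfer argument used in Proposition~\ref{prop:Pn0_GF}, contribute $O((2\sqrt{pq}+\varepsilon)^n)$; when $p>1/2$ they fall inside but a Big-Oh transfer gives only $O((2\sqrt{pq})^n n^{-3/2})$, absorbed into the same bound and dominated by the polynomial growth coming from $z=1$. To extract the behaviour at $z=1$, I would decompose $\Pk{0}(z) = (q-p)/(q(1-z)) + g(z)$ with $g$ analytic at $z=1$ when $p<1/2$, or expand $\Pk{0}(z) = \Pk{0}(1) + \Pk{0}'(1)(z-1) + \cdots$ with $\Pk{0}(1) = 1/(p-q)$ when $p>1/2$, substitute into \eqref{eq:mu2_n_GF}, collect in powers of $1/(1-z)$, and read off coefficients via $[z^n](1-z)^{-r} = \binom{n+r-1}{r-1}$. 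The hard part is the algebraic bookkeeping: when $p<1/2$, the identity $p+q=1$ must force the nominal $1/(1-z)^3$ and $1/(1-z)^2$ contributions to cancel, leaving a simple pole with residue $2p^2/(q-p)^2$; when $p>1/2$, the Laurent principal part terminates at $1/(1-z)^3$ and its three coefficients must rearrange into exactly the stated quadratic. Both verifications require the first two Taylor coefficients of $\Pk{0}$ at $z=1$, computed directly from \eqref{eq:Pn0_GF} using $1-4pq = (p-q)^2$.
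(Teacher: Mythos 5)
Your proposal is correct and follows essentially the same route as the paper: the generating function comes from $\mu^{(2)}(z)=\partial_u^2 P(z,u)\big|_{u=1}$ (the paper's proof is exactly this one line), and the asymptotics are obtained by the same singularity analysis used in Propositions~\ref{prop:Pn0_GF} and \ref{prop:Sn_1facmoment}, including the clean reduction to $\frac{1}{(1-z)^2}-\frac{(1+z)^{1/2}}{(1-z)^{3/2}}$ at $p=1/2$ and reuse of \eqref{eq:bigOhtransm1}. One slip: for $p>1/2$ the branch points $\pm 1/(2\sqrt{pq})$ still lie \emph{outside} the closed unit disk (since $2\sqrt{pq}<1$ whenever $p\neq 1/2$), not inside; this is what justifies the exponentially small bound $O\left((2\sqrt{pq})^n n^{-3/2}\right)$ you state, so the conclusion stands but the geometric description should be corrected.
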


\begin{proof}
Equation \eqref{eq:mu2_n_GF} follows directly from the relation $\mu^{(2)}(z) = \left. \partial^2_u P(z,u) \right|_{u=1}$. 

\end{proof}

\begin{proposition}
Let $S_n$ denote the stratum number of the tree generated by the $n$th iteration of the algorithm. The asymptotic form of the variance of $S_n$ is given by
\begin{equation*}
\Var{S_n} \approx \begin{dcases}
\frac{pq}{(q-p)^2}, & p < \frac{1}{2} \\ 
\left(1-\frac{2}{\pi} \right)n - \frac{1}{2}\sqrt{\frac{2n}{\pi}} + \frac{1}{4}\left(3 - \frac{1}{\pi}\right) , & p = \frac{1}{2} \\ 
\frac{pq[4(1-4pq)n-3]}{(p-q)^2}, & p > \frac{1}{2}.
\end{dcases}
\end{equation*}

The error bound is $O((2\sqrt{pq}+\varepsilon)^n)$ for the first case, $O(n^{-1/2})$ for the second and $O((2\sqrt{pq}+\varepsilon)^n \, n)$ for the third.

\begin{proof}
Noting $\Var{S_n} = \mu_n^{(2)} + \mu_n - (\mu_n)^2$, the result for cases $p < 1/2$ and $p > 1/2$ is an immediate consequence of Propositions~\ref{prop:Sn_1facmoment} and \ref{prop:Sn_2facmoment}. When $p = 1/2$, applying those propositions leads to an $O(\sqrt{n})$ error bound. In order to get a vanishing error bound, we need to expand \eqref{eq:bigOhtransm1} by an additional term, namely
\begin{equation*}
\frac{(1+z)^{1/2}}{(1-z)^{3/2}} =
\begin{dcases}
\frac{\sqrt{2}}{(1-z)^{3/2}} + \frac{1}{2\sqrt{2}} \cdot \frac{1}{\sqrt{1-z}}+O\left( \sqrt{1-z} \right), & z \rightarrow 1 \\
 O\left( \sqrt{1+z} \right), & z \rightarrow -1
\end{dcases}
\end{equation*}
which yields the refined asymptotic estimate of $\E{S_n}$ when $p = 1/2$,
\begin{equation*}
\mu_n = \sqrt{\frac{2n}{\pi}} - \frac{1}{2} + \frac{1}{4\sqrt{2\pi n}} + O(n^{-3/2}).
\end{equation*} 

The result now follows in the same manner as the other cases.

\end{proof}

\label{prop:Sn_var}
\end{proposition}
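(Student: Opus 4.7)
My strategy is the variance identity $\Var{S_n} = \mu_n^{(2)} + \mu_n - \mu_n^2$, which reduces the problem to assembling the expansions of the first and second factorial moments supplied by Propositions~\ref{prop:Sn_1facmoment} and~\ref{prop:Sn_2facmoment}. The two off-critical regimes require nothing more than careful bookkeeping of error terms; the critical regime $p = 1/2$ calls for sharpening an earlier singularity expansion, and that is where I expect the real work to lie.

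For $p < 1/2$, both $\mu_n$ and $\mu_n^{(2)}$ tend to constants ($p/(q-p)$ and $2[p/(q-p)]^2$ respectively) with error $O((2\sqrt{pq}+\varepsilon)^n)$. Squaring a constant-plus-exponentially-small quantity preserves the error class, and one-line algebra collapses $\mu_n^{(2)} + \mu_n - \mu_n^2$ to $pq/(q-p)^2$. For $p > 1/2$, $\mu_n \sim (p-q)n$, so the $(p-q)^2 n^2$ term coming from $\mu_n^2$ cancels exactly with the leading term of $\mu_n^{(2)}$; the residual linear and constant contributions simplify using the identity $1 - 4pq = (p-q)^2$ to give the stated formula. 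The error bound inflates by a factor of $n$ here because the cross term $2\mu_n \cdot O((2\sqrt{pq}+\varepsilon)^n)$ inside $\mu_n^2$ picks up an $O(n)$ prefactor from $\mu_n$, producing the claimed $O(n(2\sqrt{pq}+\varepsilon)^n)$ remainder.

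The genuine obstacle is the critical case $p = 1/2$. Substituting $\mu_n = \sqrt{2n/\pi} - 1/2 + O(n^{-1/2})$ and $\mu_n^{(2)} = n + 1 - 2\sqrt{2n/\pi} + O(n^{-1/2})$ naively produces a cross term $2\sqrt{2n/\pi}\cdot O(n^{-1/2}) = O(1)$ inside $\mu_n^2$, which corrupts the intended constant and forces the overall error to merely $O(\sqrt{n})$. To recover a vanishing error bound, I would return to the singularity analysis of $(1+z)^{1/2}(1-z)^{-3/2}$ at $z = 1$ and retain the next subleading $(1-z)^{-1/2}$ term in its local expansion. By Big-Oh transfer this upgrades $\mu_n$ to a refined expansion $\sqrt{2n/\pi} - 1/2 + c\, n^{-1/2} + O(n^{-3/2})$ with an explicit constant $c$.

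With this sharper expansion in hand, the dangerous cross term in $\mu_n^2$ drops to $O(n^{-1})$, and collecting all remaining contributions from $\mu_n^{(2)} + \mu_n - \mu_n^2$ yields the stated $(1 - 2/\pi)n - \tfrac{1}{2}\sqrt{2n/\pi} + \tfrac{1}{4}(3 - 1/\pi)$ with a genuine $O(n^{-1/2})$ remainder. The whole proof thus reduces to one localized extension of a singularity expansion plus routine algebraic bookkeeping.
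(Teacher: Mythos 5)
Your proposal is correct and follows essentially the same route as the paper: the variance identity $\Var{S_n}=\mu_n^{(2)}+\mu_n-\mu_n^2$ combined with Propositions~\ref{prop:Sn_1facmoment} and \ref{prop:Sn_2facmoment} for $p\neq 1/2$, and, for $p=1/2$, exactly the fix the paper uses---retaining the subleading $(1-z)^{-1/2}$ term in the expansion of $(1+z)^{1/2}(1-z)^{-3/2}$ at $z=1$ to refine $\mu_n$ to $\sqrt{2n/\pi}-\tfrac12+\tfrac{1}{4\sqrt{2\pi n}}+O(n^{-3/2})$ so the cross term no longer spoils the constant. The only difference is that you leave the refined constant $c$ unevaluated, whereas the paper computes it explicitly as $\tfrac{1}{4\sqrt{2\pi}}$; otherwise the arguments coincide.
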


\begin{lemma}
The generating function $H(z)$ for the expected value of $H_{S_n}$, where $H_0 = 0$ and $H_k$ denotes the $k$th harmonic number $(k \geq 1)$, is
\begin{equation}
H(z) = \frac{1}{1-z} \, \log{\left(\frac{1+\sqrt{1-4pqz^2}}{1-2pz+\sqrt{1-4pqz^2}}\right) }.
\label{eq:harnum_gf}
\end{equation}

The asymptotic form of $\nthCoeff H(z) = \E{H_{S_n}}$ is given by
\begin{equation*}
\E{H_{S_n}} \approx \begin{dcases}
\log \left( \frac{q}{q-p} \right), & p < \frac{1}{2} \\
\log \sqrt{n}, & p = \frac{1}{2} \\
\log(p-q) + \log n & p > \frac{1}{2}. 
\end{dcases}
\end{equation*}

\begin{proof}
We first note the desired expectation $\E{H_{S_n}}$ can be written as
\begin{equation*}
\sum_{k \geq 1} H_k \Prob{S_n = k} = \sum_{k \geq 1}  \sum_{j = 1}^k \frac{1}{j} \, \Prob{S_n = k} = \sum_{j \geq 1} \frac{1}{j} \sum_{k \geq j} \Prob{S_n = k} = \sum_{j \geq 1} \frac{1}{j} \Prob{S_n \geq j}.
\end{equation*}

Thus if we can find the bivariate generating function
\begin{equation*}
F(z,u) = \sum_{n \geq 0} \sum_{k \geq 0} \Prob{S_n \geq k} u^k z^n,
\end{equation*}
the desired generating function is
\begin{equation*}
H(z) = \int_0^1 \frac{1}{s} \left[ F(z,s) - \frac{1}{1-z} \right]  \dx{s}.  
\end{equation*}

To that end, we derive
\begin{align}
F(z,u) &= \sum_{n \geq 0} \sum_{k \geq 0} \Prob{S_n \geq k} u^k z^n \nonumber \\
&= \sum_{n \geq 0} \sum_{k \geq 0} u^kz^n - \sum_{n \geq 0} \sum_{k \geq 0} \Prob{S_n \leq k-1} u^k z^n.
\label{eq:Fzu}
\end{align}

Focusing on the second term of \eqref{eq:Fzu}, we find
\begin{align}
\sum_{n \geq 0} \sum_{k \geq 0} \Prob{S_n \leq k-1} u^k z^n &= \sum_{n \geq 0} \sum_{k \geq 1} \Prob{S_n \leq k-1} u^k z^n \nonumber \\
&= u \sum_{n \geq 0} \sum_{k \geq 0} \Prob{S_n \leq k} u^k z^n \nonumber \\
&= u \sum_{n \geq 0} \sum_{k \geq 0} [u^k] \frac{P_n(u)}{1-u} u^k z^n \nonumber & \text{where\ } P_n(u) \equiv \nthCoeff P(z,u) \\
&= \frac{uP(z,u)}{1-u},
\label{eq:S_nCumGF}
\end{align}
since $P_n(u)$ is the probability generating function of $S_n$ and therefore $P_n(u)(1-u)^{-1}$ is the generating function of the sequence $\{ \Prob{S_n \leq k}; k \geq 0 \}$.

Substituting \eqref{eq:S_nCumGF} into \eqref{eq:Fzu} yields
\begin{equation*}
F(z,u) = \frac{1}{(1-u)(1-z)} - \frac{uP(z,u)}{1-u},
\end{equation*}
and so
\begin{equation*}
H(z) = \int_0^1 \left[ \frac{1}{(1-s)(1-z)} - \frac{P(z,s)}{1-s} \right] \dx{s}.
\end{equation*}

We conclude the derivation of \eqref{eq:harnum_gf} by observing
\begin{equation*}
P(z,s) = \frac{A-(A+1)s}{pz(s-B)(s-C)},
\end{equation*}
where
\begin{align*}
A = qz\Pk{0}(z), && B = \frac{1+\sqrt{1-4pqz^2}}{2pz},  && C = \frac{1-\sqrt{1-4pqz^2}}{2pz}.
\end{align*}
The integral (with respect to $s$) follows immediately after a partial fractions expansion of $P(z,s)(1-s)^{-1}$ and some simplification of the result.

\end{proof}
\label{lem:harnum_gf}
\end{lemma}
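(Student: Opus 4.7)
The plan splits into two stages: deriving the generating function $H(z)$ and then performing singularity analysis in each of the three regimes. For the first stage, I would start from the telescoping identity $H_k = \sum_{j=1}^{k} 1/j$ and swap the order of summation to rewrite $\E{H_{S_n}} = \sum_{j \geq 1} \frac{1}{j}\Prob{S_n \geq j}$. The bivariate PGF $P(z,u)$ is already in hand, and since $P_n(u)/(1-u)$ generates the CDF sequence $\{\Prob{S_n \leq k}\}$ in $u$, the bivariate GF for the survival function assembles as $F(z,u) = \frac{1}{(1-u)(1-z)} - \frac{u P(z,u)}{1-u}$. The weight $1/j$ then corresponds to integrating $s^{-1}[F(z,s) - (1-z)^{-1}]$ in $s$ from $0$ to $1$. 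Writing $P(z,s)$ as a rational function of $s$ whose denominator $p z s^2 - s + qz$ has roots $B$ and $C$ expressed through $\sqrt{1-4pqz^2}$, a partial fraction decomposition of $P(z,s)/(1-s)$ reduces the $s$-integral to elementary logarithms, which collapse to \eqref{eq:harnum_gf} after invoking $\Pk{0}(z)$'s defining relation.

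For the asymptotics, the singularities of $H(z)$ are the simple pole at $z = 1$ from $1/(1-z)$ and the branch points $z = \pm 1/(2\sqrt{pq})$ from $\sqrt{1-4pqz^2}$ inside the logarithm. When $p < 1/2$, the branch points are strictly outside the unit disk and the log factor is analytic and nonzero at $z = 1$; evaluating it gives $\log\bigl((1+(1-2p))/(1-2p+(1-2p))\bigr) = \log(q/(q-p))$, and the simple-pole transfer of \cite[Theorem IV.10]{FLAJ09} yields the constant limit with geometric error. When $p > 1/2$, the denominator $1 - 2pz + \sqrt{1-4pqz^2}$ of the log argument actually vanishes at $z=1$, since $1 - 2p + (p-q) = 0$; setting $t = 1-z$ and expanding $\sqrt{1-4pqz^2} = (p-q) + \tfrac{4pq}{p-q}\, t + O(t^2)$ shows the denominator behaves like $\tfrac{2p}{p-q}\, t$, so the log argument is $\sim (p-q)/(1-z)$ and locally $H(z) \sim \frac{1}{1-z}\bigl[\log(p-q) + \log\tfrac{1}{1-z}\bigr]$, transferring to $\log(p-q) + H_n \sim \log(p-q) + \log n$. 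When $p = 1/2$, $\sqrt{1-z^2} \sim \sqrt{2(1-z)}$, so the log argument is $\sim 1/\sqrt{2(1-z)}$ and the log contributes $\tfrac{1}{2}\log\tfrac{1}{1-z} + O(1)$; combining with $1/(1-z)$ then yields $\tfrac{1}{2}H_n \sim \log\sqrt{n}$.

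The main obstacle will be the $p > 1/2$ case, where the log factor itself develops a non-analytic point at $z = 1$ through a delicate cancellation between $1-2pz$ and $\sqrt{1-4pqz^2}$; extracting the leading $\log(1/(1-z))$ term requires a careful low-order Taylor expansion of the square root before composing with the pole from $1/(1-z)$. Secondary care is needed to justify star-continuability of $H(z)$ into a $\Delta$-domain at $z = 1$ so that the standard transfer theorems apply; this is routine for $p \neq 1/2$ because the other branch point at $-1/(2\sqrt{pq})$ is bounded away from $1$, and in the symmetric case $p = 1/2$ the contribution from the coalesced branch point at $z=-1$ must be checked to be subdominant by Big-Oh transfer, exactly as was done in Proposition \ref{prop:Sn_1facmoment}.
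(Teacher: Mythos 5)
Your derivation of $H(z)$ follows essentially the same route as the paper: swapping the order of summation to get $\sum_{j\geq 1}\frac{1}{j}\Prob{S_n \geq j}$, assembling $F(z,u)$ from the cumulative-distribution generating function $P_n(u)/(1-u)$, reducing to the $s$-integral of $P(z,s)/(1-s)$, and integrating via partial fractions over the roots $B,C$ of $pzs^2-s+qz$. The asymptotic analysis, which the paper states without proof as following its standard singularity-analysis methodology, is carried out correctly in your sketch, including the key cancellation $1-2p+(p-q)=0$ at $z=1$ in the $p>\tfrac{1}{2}$ case.
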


\begin{lemma}
Let $Z_n$ denote the size of the tree generated by the $n$th iteration of the algorithm, ie, $Z_n = S_n + 1$. The generating function $h(z)$ for the mean of the reciprocal of $Z_n$, that is, $\E{Z_n^{-1}}$ is
\begin{equation}
h(z) = \frac{1+\sqrt{1-4pqz^2}}{pz[1-2qz+\sqrt{1-4pqz^2}]} \, \log{\left(\frac{1+\sqrt{1-4pqz^2}}{1-2pz+\sqrt{1-4pqz^2}}\right) }.
\label{eq:recip_exp_size_gf}
\end{equation}

The asymptotic form of $\nthCoeff h(z) = \E{Z_n^{-1}}$ is given by
\begin{equation*}
\E{Z_n^{-1}} \approx \begin{dcases}
\frac{q-p}{p} \log \left( \frac{q}{q-p} \right) , & p < \frac{1}{2} \\
\frac{\log n}{\sqrt{2\pi n}}, & p = \frac{1}{2} \\
\frac{1}{(p-q)n}, & p > \frac{1}{2}. 
\end{dcases}
\end{equation*}

\begin{proof}
It is straightforward to show $h(z) = \int_0^1 P(z,s) \dx{s}$ and using the partial fractions expansion outlined in Lemma~\ref{lem:harnum_gf}, the integral leads to \eqref{eq:recip_exp_size_gf}. 
 
\end{proof}

\label{lem:recip_treesize_gf}
\end{lemma}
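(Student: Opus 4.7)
The plan is to establish the integral representation $h(z) = \int_0^1 P(z,s)\,ds$ and then evaluate it explicitly using the partial fractions decomposition introduced in the proof of Lemma~\ref{lem:harnum_gf}. The starting point is the reciprocal identity $\frac{1}{k+1} = \int_0^1 s^k\,ds$, which gives $\E{Z_n^{-1}} = \sum_{k \ge 0} \frac{P_{n,k}}{k+1} = \int_0^1 P_n(s)\,ds$ after interchanging the nonnegative sum with the integral. Multiplying by $z^n$ and summing over $n \ge 0$ yields $h(z) = \int_0^1 P(z,s)\,ds$.

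To evaluate this integral, I would reuse the factorization $P(z,s) = \frac{A-(A+1)s}{pz(s-B)(s-C)}$ with $A = qz\Pk{0}(z)$ and $B, C = (1 \pm \sqrt{1-4pqz^2})/(2pz)$. A partial fractions expansion in $s$ yields two terms of the form $\alpha/(s-B) + \beta/(s-C)$, and integrating from $0$ to $1$ produces a linear combination of $\log\frac{B-1}{B}$ and $\log\frac{C-1}{C}$. The main obstacle will be the algebraic simplification of the result into the compact form~\eqref{eq:recip_exp_size_gf}; this requires repeated use of $B + C = 1/(pz)$, $BC = q/p$, $B - C = \sqrt{1-4pqz^2}/(pz)$, and the explicit form $\Pk{0}(z) = 2/(1-2qz+\sqrt{1-4pqz^2})$, in order to merge the two logarithms into a single $\log$ with the stated argument and to collapse the rational prefactor.

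For the asymptotics, I would apply singularity analysis to $h(z)$ at $z=1$ in each regime, noting that the branch points at $z = \pm 1/(2\sqrt{pq})$ lie strictly outside the unit disk whenever $p \ne 1/2$. When $p < 1/2$, the expansion $1 - 2qz + \sqrt{1 - 4pqz^2} \sim \frac{2q}{q-p}(1-z)$ near $z=1$ shows that the rational prefactor has a simple pole at $z=1$ with $1/(1-z)$-coefficient equal to $(q-p)/p$, while the logarithm tends to $\log(q/(q-p))$; transfer via \cite[Theorem IV.10, p 258]{FLAJ09} gives the stated constant limit. When $p = 1/2$, the prefactor scales like $\sqrt{2/(1-z)}$ and the logarithm like $-\frac{1}{2}\log(1-z)$, so $h(z) \sim -\frac{\sqrt{2}}{2}(1-z)^{-1/2}\log(1-z)$, whose $n$th coefficient is $\log n / \sqrt{2\pi n}$ by standard transfer. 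When $p > 1/2$, the prefactor is analytic at $z=1$ with value $1/(p-q)$, while $1 - 2pz + \sqrt{1 - 4pqz^2} \sim \frac{2p}{p-q}(1-z)$ forces the logarithm to diverge as $\log(1/(1-z))$, yielding $[z^n] h(z) \sim 1/((p-q)n)$. The expected main difficulty is the algebraic simplification inside the integral; once $h(z)$ is in closed form, the singularity analysis is routine given the tools already developed in this section.
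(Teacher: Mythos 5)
Your proposal takes exactly the paper's route: the paper likewise derives $h(z)=\int_0^1 P(z,s)\,\dx{s}$ and evaluates it with the partial-fraction setup of Lemma~\ref{lem:harnum_gf}, and your singularity analysis in the three regimes (which the paper states without proof) checks out, with the correct local expansions at $z=1$ in each case. One small refinement: carrying out the partial fractions you will find the coefficient of $1/(s-C)$ is zero, because the numerator $A-(A+1)s$ vanishes at $s=C=A/(A+1)$ --- which is also what keeps the integral finite even though $C\in(0,1)$ for small $z$ --- so the single logarithm in \eqref{eq:recip_exp_size_gf} appears directly rather than from merging two logarithms.
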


\begin{proposition}
Let $H_{Z_n}$ denote the $Z_n$th harmonic number, where $Z_n=S_n+1$ is the tree size after the $n$th iteration of the algorithm. The generating function of $\E{H_{Z_n}}$ is $H(z)+h(z)$ as given in lemmas~\ref{lem:harnum_gf} and \ref{lem:recip_treesize_gf}. The asymptotic estimates of $\E{H_{Z_n}}$ are
\begin{equation*}
\E{H_{Z_n}} \approx \begin{dcases}
\frac{q}{p} \log \left( \frac{q}{q-p} \right) , & p < \frac{1}{2} \\
\log \sqrt{n} + \frac{\log n}{\sqrt{2\pi n}}, & p = \frac{1}{2} \\
\log(p-q) + \log n + \frac{1}{(p-q)n}, & p > \frac{1}{2}. 
\end{dcases}
\end{equation*}

\begin{proof}
The results follow immediately from the identity $H_{Z_n} = H_{S_n} + \frac{1}{Z_n}$.
\end{proof}

\label{prop:harmonic_num_gf}
\end{proposition}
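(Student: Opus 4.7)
The plan is to reduce this proposition to the two preceding lemmas via the elementary harmonic-number identity $H_{k+1}=H_k+1/(k+1)$. Since $Z_n=S_n+1$ and $H_0=0$, a two-line case check (the case $S_n=0$ giving $H_{Z_n}=H_1=1=H_0+1/Z_n$, and $S_n\geq 1$ following from the standard harmonic recursion) shows
\begin{equation*}
H_{Z_n}=H_{S_n}+\frac{1}{Z_n}
\end{equation*}
pointwise on the sample space. Taking expectations and invoking linearity yields $\E{H_{Z_n}}=\E{H_{S_n}}+\E{Z_n^{-1}}$, so the ordinary generating function of $\{\E{H_{Z_n}}\}_{n\geq 0}$ is the sum of the generating functions produced in Lemmas~\ref{lem:harnum_gf} and~\ref{lem:recip_treesize_gf}, namely $H(z)+h(z)$.

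Next I would obtain the three asymptotic regimes simply by adding the corresponding estimates already tabulated in those two lemmas. For $p<1/2$, the two contributions are $\log(q/(q-p))$ and $\frac{q-p}{p}\log(q/(q-p))$, whose sum factors as $\bigl[1+(q-p)/p\bigr]\log(q/(q-p))=\frac{q}{p}\log(q/(q-p))$. For $p=1/2$ the contributions $\log\sqrt{n}$ and $(\log n)/\sqrt{2\pi n}$ are already expressed on incomparable scales, so they add without cancellation, and similarly in the case $p>1/2$ the sum $\log(p-q)+\log n+\frac{1}{(p-q)n}$ is immediate.

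There is essentially no obstacle here: the only thing to watch is the boundary convention $H_0=0$ built into Lemma~\ref{lem:harnum_gf}, which is precisely what makes the pointwise identity work even when $S_n=0$. I would write the proof as two short paragraphs (identity plus the three arithmetic sums), exactly in the spirit of the one-line sketch the author has already provided.
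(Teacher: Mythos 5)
Your proposal is correct and follows exactly the paper's route: the pointwise identity $H_{Z_n}=H_{S_n}+1/Z_n$ (valid even when $S_n=0$ thanks to the convention $H_0=0$), linearity of expectation to get $H(z)+h(z)$, and termwise addition of the asymptotics from Lemmas~\ref{lem:harnum_gf} and~\ref{lem:recip_treesize_gf}, including the factorization $\bigl[1+(q-p)/p\bigr]\log\left(\frac{q}{q-p}\right)=\frac{q}{p}\log\left(\frac{q}{q-p}\right)$ in the $p<1/2$ case. You have merely made explicit the arithmetic the paper leaves to the reader.
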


\section{Application of Results to Tree Functionals}

\subsection{Tree Size}

Given we use uniform attachment for the addition rule and any member of the class defined in Corollary~\ref{cor:eqprobclass} for deletion, we immediately have from Propositions~\ref{prop:Sn_1facmoment} and \ref{prop:Sn_var} the asymptotics of the expected tree size  and corresponding variance of tree $T_n$, namely $\E{Z_n} = \E{S_n}+1$ and $\Var{Z_n} = \Var{S_n}$. We note there are three behavioral regimes determined by whether insertion probability $p$ is less than, equal to, or exceeds $1/2$.

\subsection{Leaf Count}

By conditioning on stratum number, we can obtain similar results for other tree functionals. To see this, suppose the distribution of $T_n$, conditioned on $S_n=k$, is equiprobable. That is,
\begin{equation*}
\mathbb{P}_q \left\lbrace T_n=t \given S_n=k \right\rbrace  = 
\begin{dcases}
\frac{1}{k!},& t \text{\ is a stratum\ } k \text{\ tree} \\
0,& \text{otherwise},
\end{dcases}
\end{equation*}

where $\mathbb{P}_x$ is the probability of an event given a deletion probability of $x$. Observe $\mathbb{P}_0 \left\lbrace T_k = t \right\rbrace = \frac{1}{k!}$ whenever $t$ is a stratum $k$ tree and is 0 otherwise. The conditional expectation of $f(T_n)$ is thus given by
\begin{align*}
\mathbb{E}_q \left[ f(T_n) \given S_n=k \right] &= \sum_{t \in \mathcal{T}} f(t) \mathbb{P}_q \left\lbrace T_n=t \given S_n=k \right\rbrace \\
&= \sum_{t \in \mathcal{T}_k } f(t) \mathbb{P}_q \left\lbrace T_n=t \given S_n=k \right\rbrace = \sum_{t \in \mathcal{T}_k } f(t) \mathbb{P}_0 \left\lbrace T_k = t \right\rbrace
= \mathbb{E}_0 \left[ f(T_k) \right], 
\end{align*}
where $\mathcal{T}$ and $\mathcal{T}_k \subseteq \mathcal{T}$ denote the set of all possible trees and the subset of stratum $k$ trees, respectively, and $\mathbb{E}_x$ is the expectation of an event given a deletion probability of $x$.

Since the expectation $\mathbb{E}_0 \left[ f(T_k) \right]$ depends only on $k$, we can ``ignore'' the effect of deletion probability $q$ on the probabilistic behavior of the tree functional and, by iterating expectation, exploit the useful result
\begin{equation}
\mathbb{E}_q \left[ f(T_n) \right] = \mathbb{E}_q \left[ \mathbb{E}_0 \left[ f(T_{S_n}) \right] \right].
\label{eq:ignore_q}
\end{equation}
Interestingly, this results holds regardless of the specific deletion rule from the class chosen. Whether it is \textit{LIFO} or something more intricate, the moments are the same.

If we let $L_n$ denote the number of leaves in the tree at time $n$, we have the well-known result \cite[pp 326-327]{HOFR18}
\begin{equation*}
\mathbb{E}_0 \left[ L_n \right] = \frac{n+1}{2} \Indicator{n>0} + \Indicator{n=0},
\end{equation*}
where $\Indicator{\cdot}$ is Iverson bracket notation for an indicator function. By using \eqref{eq:ignore_q} we can deduce
\begin{equation*}
\mathbb{E}_q [L_n] = \frac{1+\mathbb{E}_q [S_n] + \mathbb{P}_q \Event{S_n=0} }{2}.
\end{equation*}

Applying Propositions~\ref{prop:Pn0_GF} and \ref{prop:Sn_1facmoment} yields the generating function and asymptotic estimate.

\subsection{Root Degree}

Let $D_n$ denote the degree of the root node at time $n$. Since
\begin{equation*}
\mathbb{E}_0 \left[ D_n \right] = H_{n+1},
\end{equation*}
where $H_n$ is the $n$th harmonic number \cite[pp 323-324]{HOFR18}, we find
\begin{equation*}
\mathbb{E}_q [D_n] = \mathbb{E}_q [H_{S_n+1}] = \mathbb{E}_q [H_{Z_n}],
\end{equation*}
and obtain the corresponding generating function and asymptotics from Proposition~\ref{prop:harmonic_num_gf}.

\section{Summary}

By allowing for the possibility of node removal during the course of their evolution, we can extend the utility of random recursive trees models. The analysis of such trees; however, is complicated by the fact that the tree size at time $n$ is no longer deterministic. Nevertheless, for the class of deletion rules identified by Corollary~\ref{cor:eqprobclass}, we showed the current tree $T_n$ conditioned on its size is uniformly distributed over its range. This reduces the problem of studying $T_n$ to that of studying its stratum number. By using generating function theory, we obtain several results for the expected tree size, leaf count and root degree of tree $T_n$.

\bibliographystyle{amsplain}
\bibliography{rtreeswdel}

\end{document}